\newtheorem{theorem}{Theorem}[section]
\theoremstyle{definition}
\newtheorem{definition}[theorem]{Definition}
\title[Noether's theorem for H-O problems of Herglotz]{%
Noether's theorem for higher-order variational problems of Herglotz type}
\author[Sim\~{a}o P. S. Santos, Nat\'{a}lia Martins and Delfim F. M. Torres]{}
\subjclass{Primary: 49K15, 49S05; Secondary: 49K05, 34H05.}
\keywords{Herglotz's problems, higher-order calculus of variations,
optimal control, Euler--Lagrange equations, invariance,
DuBois--Reymond condition, Noether's theorem.}
\email{spsantos@ua.pt}
\email{natalia@ua.pt}
\email{delfim@ua.pt}
\thanks{This work is part of first author's Ph.D.,
which is carried out at the University of Aveiro.}
\begin{document}

\maketitle

\centerline{\scshape Sim\~{a}o P. S. Santos, Nat\'{a}lia Martins and Delfim F. M. Torres}
\medskip
{\footnotesize
\centerline{Center for Research and Development in Mathematics and Applications (CIDMA)}
\centerline{Department of Mathematics, University of Aveiro, 3810-193 Aveiro, Portugal}}

\bigskip


\begin{abstract}
We approach higher-order variational problems of Herglotz type from an optimal
control point of view. Using optimal control theory, we derive a generalized
Euler--Lagrange equation, transversality conditions, DuBois--Reymond necessary
optimality condition and Noether's theorem for Herglotz's type higher-order
variational problems, valid for piecewise smooth functions.
\end{abstract}


\section{Introduction}

The generalized variational problem proposed
by Herglotz in 1930 \cite{Herglotz1930} can be formulated as follows:
\begin{equation}
\label{PH_1}
\tag{$H_1$}
\begin{gathered}
z(b)\longrightarrow \text{extr} \\
\text{with } \dot{z}(t)=L(t,x(t),\dot{x}(t),z(t)), \quad t \in [a,b],\\
\text{subject to }  z(a)=\gamma, \quad  \gamma \in \mathbb{R}.
\end{gathered}
\end{equation}
The Herglotz variational problem consists in the determination of trajectories
$x(\cdot)$ subject to some initial condition $x(a)=\alpha$, $\alpha \in \mathbb{R}$,
that extremize (maximize or minimize) the value $z(b)$,
where $L \in C^1([a,b]\times \mathbb{R}^{2m+1};\mathbb{R})$.
While in \cite{Herglotz1930} the admissible functions
are $x(\cdot) \in C^2([a,b];\mathbb{R}^m)$ and $z(\cdot) \in C^1([a,b];\mathbb{R})$,
here we consider \eqref{PH_1} in the wider class of functions
$x(\cdot) \in PC^1([a,b];\mathbb{R}^m)$ and $z(\cdot) \in PC^1([a,b];\mathbb{R})$,
where the notation $PC$ stands for ``piecewise continuous'' (for the precise
meaning of piecewise continuity and piecewise differentiability see, e.g.,
\cite[Sec.~1.1]{MR2316829}).

It is clear to see that Herglotz's problem \eqref{PH_1} reduces to the classical
fundamental problem of the calculus of variations
(see, e.g., \cite{vanBrunt}) if the Lagrangian $L$ does not depend on the
$z$ variable: if $\dot{z}(t)=L(t,x(t),\dot{x}(t))$, $t \in [a,b]$,
then \eqref{PH_1} is equivalent to the classical variational problem
\begin{equation}
\label{eq:class:Funct}
\int_a^b L(t,x(t),\dot{x}(t))dt \longrightarrow \textrm{extr}.
\end{equation}
Herglotz proved that a necessary optimality
condition for a pair $\left(x(\cdot),z(\cdot)\right)$ to be an extremizer
of the generalized variational problem \eqref{PH_1} is given by
\begin{multline}
\label{eq:gen:EL}
\frac{\partial L}{\partial x}\left(t,x(t),\dot{x}(t),z(t)\right)
-\frac{d}{dt}\frac{\partial L}{\partial \dot{x}}\left(t,x(t),\dot{x}(t),z(t)\right)\\
+\frac{\partial L}{\partial z}\left(t,x(t),\dot{x}(t),z(t)\right)
\frac{\partial L}{\partial \dot{x}}\left(t,x(t),\dot{x}(t),z(t)\right) = 0,
\end{multline}
$t \in [a,b]$. The equation \eqref{eq:gen:EL} is known as the generalized
Euler--Lagrange equation. Observe that for the classical problem
of the calculus of variations \eqref{eq:class:Funct} one has
$\frac{\partial L}{\partial z}=0$ and equation
\eqref{eq:gen:EL} reduces to the classical Euler--Lagrange equation
$$
\frac{\partial L}{\partial x}\left(t,x(t),\dot{x}(t)\right)
-\frac{d}{dt}\frac{\partial L}{\partial \dot{x}}\left(t,x(t),\dot{x}(t)\right) =0.
$$

In \cite{Simao+NM+Torres2013} we have introduced higher-order variational problems
of Herglotz type and obtained a generalized Euler--Lagrange equation and transversality
conditions for these problems. In particular, we considered the problem
of determining the trajectories $x(\cdot)$ such that
\begin{equation}
\label{PH_n}
\tag{$H_n$}
\begin{gathered}
z(b)\longrightarrow \text{extr}\\
\text{with }\dot{z}(t)=L(t,x(t),\dot{x}(t),\ldots,x^{(n)}(t),z(t)), \quad t \in [a,b],\\
\text{subject to } z(a)=\gamma,\quad \gamma \in \mathbb{R}.
\end{gathered}
\end{equation}
We proved that if a pair $(x(\cdot),z(\cdot))$ is an extremizer of the higher-order problem
\eqref{PH_n}, then it satisfies the higher-order generalized Euler--Lagrange equation
\begin{equation*}
\sum_{j=0}^n(-1)^j\frac{d^j}{dt^j}\left(\psi_z(t)
\frac{\partial L}{\partial x^{(j)}}\langle x, z \rangle_n(t)\right)=0,
\quad t \in [a,b],
\end{equation*}
and the transversality conditions $\psi_j(b)=\psi_j(a)=0$, for $j=1, \ldots, n$, where
\begin{equation*}
\begin{cases}
\psi_z(t)=e^{\int_t^b\frac{\partial L}{\partial z}\langle x, z \rangle_n(\theta)d\theta}\\
\psi_j(t)=\sum_{i=0}^{n-j}(-1)^{i+1}\frac{d^i}{dt^i}\left(\psi_z(t)
\frac{\partial L}{\partial x^{(i+j)}}\langle x, z \rangle_n(t)\right),
\quad j=1,\ldots,n.
\end{cases}
\end{equation*}
While in \cite{Simao+NM+Torres2013} the admissible functions are
$x(\cdot)\in C^{2n}([a,b]; \mathbb{R}^m)$ and $z(\cdot) \in C^1([a,b];\mathbb{R})$,
here we consider \eqref{PH_n} in the wider class of functions
$x(\cdot)\in PC^{n}([a,b]; \mathbb{R}^m)$ and $z(\cdot) \in PC^1([a,b];\mathbb{R})$.

One of the most important results in optimal control theory is Pontryagin's
maximum principle proved by Pontryagin et al. in \cite{Pontryagin}.
This principle provides conditions for optimization problems with
differential equations as constraints. The maximum principle is still
widely used for solving problems of control and other problems of dynamic optimization.
Moreover, basic necessary optimality conditions from classical
calculus of variations follow from Pontryagin's maximum principle.

One of the problems of optimal control, in Bolza form, is the following one:
\begin{equation}
\label{problem P}
\tag{$P$}
\begin{gathered}
\mathcal{J}(x(\cdot),u(\cdot))=\int_a^b f(t,x(t),u(t))dt
+\phi(x(b))\longrightarrow\text{extr}\\
\text{subject to } \dot{x}(t)=g(t,x(t),u(t)),
\end{gathered}
\end{equation}
with some initial condition on $x$,
where $f \in C^1([a,b]\times \mathbb{R}^{m}\times \Omega;\mathbb{R})$,
$\phi \in C^1(\mathbb{R}^{m};\mathbb{R})$,
$g \in C^1([a,b]\times \mathbb{R}^{m}\times \Omega;\mathbb{R}^m)$,
$x \in PC^1([a,b]; \mathbb{R}^m)$ and $u\in PC([a,b];\Omega)$,
with $\Omega \subseteq \mathbb{R}^r$ an open set. In the literature
of optimal control, $x$ and $u$ are frequently called the state
and control variables, respectively, while $\phi$ is known
as the payoff or salvage term. Note that the classical
problem of the calculus of variations \eqref{eq:class:Funct}
is a particular case of problem \eqref{problem P}
with $\phi(x) \equiv 0$, $g(t,x,u)=u$ and $\Omega=\mathbb{R}^m$.
In this work we show how the results on the higher-order
variational problem of Herglotz \eqref{PH_n} obtained in
\cite{Simao+NM+Torres2013} can be generalized by using the theory of optimal control.
The technique used consists in rewriting the generalized higher-order variational
problem of Herglotz \eqref{PH_n} as a standard optimal control problem \eqref{problem P},
and then to apply available results of optimal control theory.
For the first-order case we refer the reader to \cite{MyID:313}.

The paper is organized as follows. In Section~\ref{sec:prelim}
we present the necessary concepts and results from optimal control theory:
Pontryagin's maximum principle (Theorem~\ref{PMP});
the DuBois--Reymond condition of optimal control (Theorem~\ref{thm DB-R OC});
and the Noether theorem of optimal control (Theorem~\ref{thm opt cont Noether}).
Our main results are given in Section~\ref{sec:MainRes}:
we extend the higher-order Euler--Lagrange equation and the transversality conditions
for problem \eqref{PH_n} found in \cite{Simao+NM+Torres2013} to admissible
functions $x(\cdot) \in PC^n([a,b];\mathbb{R}^m)$ and $z(\cdot) \in PC^1([a,b];\mathbb{R})$
(Theorem~\ref{thm H-O EL+TC}); we obtain a DuBois--Reymond necessary optimality condition
for problem \eqref{PH_n} (Theorem~\ref{thm DB-R PH_n}); and we generalize the Noether theorem
to higher-order variational problems of Herglotz type (Theorem~\ref{thm noether PH_n}).
We end with Section~\ref{sec:conc} of conclusions.


\section{Preliminaries}
\label{sec:prelim}

We begin this section by stating the well known Pontryagin's maximum principle,
which is a first-order necessary optimality condition.

\begin{theorem}[Pontryagin's maximum principle for problem \eqref{problem P} \cite{Pontryagin}]
\label{PMP}
If a pair $(x(\cdot),u(\cdot))$ with $x \in PC^1([a,b]; \mathbb{R}^m)$ and $u\in PC([a,b];\Omega)$
is a solution to problem \eqref{problem P} with the initial condition $x(a)=\alpha$,
$\alpha \in \mathbb{R}$, then there exists $\psi \in PC^1([a,b];\mathbb{R}^m)$
such that the following conditions hold:
\begin{itemize}

\item the optimality condition
\begin{equation}
\label{prob P opt condt}
\frac{\partial H}{\partial u}(t, x(t),u(t), \psi(t))=0;
\end{equation}

\item the adjoint system
\begin{equation}
\label{prob P adj syst}
\begin{cases}
\dot{x}(t)=\frac{\partial H}{\partial \psi}(t, x(t),u(t), \psi(t))\\
\dot{\psi}(t)=-\frac{\partial H}{\partial x}(t, x(t),u(t), \psi(t));
\end{cases}
\end{equation}

\item and the transversality condition
\begin{equation}
\label{prob P tr condt}
\psi(b)=grad(\phi(x))(b);
\end{equation}
\end{itemize}
where the Hamiltonian $H$ is defined by
\begin{equation}
\label{eq:def:Hamiltonian}
H(t,x,u,\psi)=f(t,x,u)+\psi\cdot g(t,x,u).
\end{equation}
\end{theorem}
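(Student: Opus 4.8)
The plan is to treat $\psi$ as a costate Lagrange multiplier and to extract all three conclusions from a single first-variation computation. This works because the stated optimality condition \eqref{prob P opt condt} is the \emph{weak} form $\partial H/\partial u=0$ (legitimate since $\Omega$ is open and we perturb interior controls), rather than a pointwise maximization of $H$. Consequently the needle variations and separation argument of the full maximum principle can be bypassed, and a classical variational argument suffices. First I would adjoin the dynamics to the cost, forming
\[
\widetilde{\mathcal{J}}(x,u,\psi)=\int_a^b\Big[H(t,x,u,\psi)-\psi\cdot\dot{x}\Big]\,dt+\phi(x(b)),
\]
with $H$ as in \eqref{eq:def:Hamiltonian}; this agrees with $\mathcal{J}$ along any admissible pair, because $\dot{x}=g$ forces $H-\psi\cdot\dot{x}=f$. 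Integrating $\psi\cdot\dot{x}$ by parts (piecewise, using continuity of $x$ and $\psi\in PC^1$) rewrites this as
\[
\widetilde{\mathcal{J}}=\int_a^b\big[H+\dot\psi\cdot x\big]\,dt-\psi(b)\cdot x(b)+\psi(a)\cdot x(a)+\phi(x(b)).
\]

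Next I would perturb the optimal control by $u\mapsto u+\ep\,\eta$ with $\eta\in PC([a,b];\mathbb{R}^r)$ arbitrary; openness of $\Omega$ keeps the perturbed control admissible for small $\ep$. By standard results on differentiable dependence of ODE solutions on parameters, the state response $x_\ep$ is differentiable in $\ep$, and its variation $\xi=\partial x_\ep/\partial\ep|_{\ep=0}$ solves the linearized state equation with $\xi(a)=0$ (since $x(a)=\alpha$ is fixed). Differentiating $\widetilde{\mathcal{J}}$ at $\ep=0$ and grouping the $\xi$- and $\eta$-terms yields
\[
\delta\widetilde{\mathcal{J}}=\int_a^b\Big[\Big(\tfrac{\partial H}{\partial x}+\dot\psi\Big)\cdot\xi+\tfrac{\partial H}{\partial u}\cdot\eta\Big]\,dt+\big(\mathrm{grad}\,\phi(x(b))-\psi(b)\big)\cdot\xi(b).
\]

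Since $\psi$ is at our disposal, I would fix it as the unique $PC^1$ solution of the adjoint equation $\dot\psi=-\partial H/\partial x$ integrated backward from the terminal value $\psi(b)=\mathrm{grad}\,\phi(x(b))$. This single choice simultaneously annihilates the $\xi$-integrand and the boundary term, and it is precisely the transversality condition \eqref{prob P tr condt}; the first line of the adjoint system \eqref{prob P adj syst} is just the original dynamics, as $\partial H/\partial\psi=g$. With these selections $\delta\widetilde{\mathcal{J}}=\int_a^b\tfrac{\partial H}{\partial u}\cdot\eta\,dt$ must vanish for every admissible $\eta$, so the fundamental lemma of the calculus of variations (applied on each interval of smoothness) gives $\partial H/\partial u=0$, which is \eqref{prob P opt condt}.

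I expect the principal obstacle to be analytic rather than algebraic: the piecewise setting requires justifying differentiation of the state flow in $\ep$ when $u$, and hence $g$, is only piecewise continuous, controlling the variation $\xi$ across the finitely many corners, and confirming that backward integration of the adjoint ODE produces a genuinely $PC^1$ costate without spurious jumps at the corners. Once the existence and regularity of $\xi$ and $\psi$ are secured, the derivation of the optimality, adjoint, and transversality conditions reduces to the routine multiplier computation sketched above.
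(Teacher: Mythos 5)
The paper offers no proof of this theorem: it is quoted as a classical result and attributed to Pontryagin et al.\ \cite{Pontryagin}, so there is no in-paper argument to compare yours against. Judged on its own merits, your derivation is correct, and it works precisely for the reason you flag at the outset: the statement is the \emph{weak} maximum principle, whose optimality condition \eqref{prob P opt condt} is stationarity of $H$ in $u$ rather than pointwise maximization over $\Omega$; openness of $\Omega$ makes the interior perturbations $u+\varepsilon\eta$ admissible, so the classical multiplier computation suffices and needle variations can indeed be bypassed. Your choice of $\psi$ as the backward solution of the adjoint equation with terminal value $\mathrm{grad}\,\phi(x(b))$ is legitimate --- it is a linear ODE with piecewise-continuous coefficients, hence has a unique continuous, piecewise-$C^1$ solution --- and this single choice simultaneously annihilates the $\xi$-integrand and the boundary term, after which the fundamental lemma (take $\eta=\partial H/\partial u$, which is piecewise continuous along the extremal) yields \eqref{prob P opt condt}; the first equation of \eqref{prob P adj syst} is, as you say, just the dynamics. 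The technical caveats you list are the right ones, and to them I would add only two remarks: admissibility of $u+\varepsilon\eta$ for small $\varepsilon$ requires the closure of the range of $u$ to lie inside $\Omega$, which is the standard reading of $u\in PC([a,b];\Omega)$ but should be stated; and your argument, by construction, cannot recover the strong form of the principle in which $H(t,x(t),\cdot,\psi(t))$ is maximized over $\Omega$ --- since the theorem as stated never claims that, this is a limitation of scope, not a gap.
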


\begin{definition}[Pontryagin extremal to \eqref{problem P}]
A triplet $(x(\cdot),u(\cdot), \psi(\cdot))$ with
$x \in PC^1([a,b]; \mathbb{R}^m)$, $u\in PC([a,b];\Omega)$ and
$\psi \in PC^1([a,b];\mathbb{R}^m)$ is called a Pontryagin extremal
to problem \eqref{problem P} if it satisfies the optimality condition
\eqref{prob P opt condt}, the adjoint system \eqref{prob P adj syst}
and the transversality condition \eqref{prob P tr condt}.
\end{definition}

\begin{theorem}[DuBois--Reymond condition of optimal control \cite{Pontryagin}]
\label{thm DB-R OC}
If $(x(\cdot),u(\cdot), \psi(\cdot))$ is a Pontryagin extremal
to problem \eqref{problem P}, then the Hamiltonian \eqref{eq:def:Hamiltonian}
satisfies the equality
\begin{equation*}
\frac{d H}{dt}(t,x(t),u(t),\psi(t))
=\frac{\partial H}{\partial t}(t,x(t),u(t),\psi(t)),
\quad 	t \in [a,b].
\end{equation*}
\end{theorem}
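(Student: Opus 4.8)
The plan is to show that the scalar function $\mathcal{H}(t):=H(t,x(t),u(t),\psi(t))$ obtained by restricting the Hamiltonian to the extremal has, at each point where the control is continuous, total time derivative equal to $\frac{\partial H}{\partial t}$. Formally, the chain rule produces four contributions,
$$\frac{d\mathcal{H}}{dt}=\frac{\partial H}{\partial t}+\frac{\partial H}{\partial x}\cdot\dot{x}+\frac{\partial H}{\partial u}\cdot\dot{u}+\frac{\partial H}{\partial \psi}\cdot\dot{\psi},$$
and the three Pontryagin conditions are exactly what is needed to kill the last three. Indeed, the adjoint system \eqref{prob P adj syst} gives $\dot{x}=\frac{\partial H}{\partial \psi}$ and $\dot{\psi}=-\frac{\partial H}{\partial x}$, whence $\frac{\partial H}{\partial x}\cdot\dot{x}+\frac{\partial H}{\partial \psi}\cdot\dot{\psi}=\frac{\partial H}{\partial x}\cdot\frac{\partial H}{\partial \psi}-\frac{\partial H}{\partial \psi}\cdot\frac{\partial H}{\partial x}=0$, while the optimality condition \eqref{prob P opt condt}, $\frac{\partial H}{\partial u}=0$, removes the control term. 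What remains is precisely $\frac{\partial H}{\partial t}$, the claimed identity.

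The delicate point is that this formal computation invokes $\dot{u}$, whereas $u$ is only assumed piecewise continuous, so I would avoid differentiating $u$ altogether. I would fix an open subinterval on which $x,\psi$ are $C^1$ and $u$ is continuous, choose a point $t$ there, and for small $h$ split the increment as $\mathcal{H}(t+h)-\mathcal{H}(t)=A(h)+B(h)$, where $A(h)=H(t+h,x(t+h),u(t+h),\psi(t+h))-H(t,x(t),u(t+h),\psi(t))$ freezes the control at the value $u(t+h)$ while letting only $t$, $x$ and $\psi$ vary, and $B(h)=H(t,x(t),u(t+h),\psi(t))-H(t,x(t),u(t),\psi(t))$ isolates the variation of the control alone. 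For $A(h)$ the map $s\mapsto H(s,x(s),u(t+h),\psi(s))$ is $C^1$ on the subinterval, since its control slot holds the constant $u(t+h)$ and $x,\psi\in C^1$; by the fundamental theorem of calculus, $A(h)$ is the integral over $[t,t+h]$ of $\frac{\partial H}{\partial t}+\frac{\partial H}{\partial x}\cdot\dot{x}+\frac{\partial H}{\partial \psi}\cdot\dot{\psi}$ evaluated at control $u(t+h)$. Dividing by $h$, letting $h\to0$, and using continuity of $u$ at $t$ together with continuity of the data and of $x,\psi,\dot{x},\dot{\psi}$, the quotient $A(h)/h$ tends to $\bigl(\frac{\partial H}{\partial t}+\frac{\partial H}{\partial x}\cdot\dot{x}+\frac{\partial H}{\partial \psi}\cdot\dot{\psi}\bigr)(t,x(t),u(t),\psi(t))$, which by the adjoint cancellation above equals $\frac{\partial H}{\partial t}(t,x(t),u(t),\psi(t))$.

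The main obstacle is the term $B(h)$. Since \eqref{prob P opt condt} says that $u(t)$ is a critical point of $v\mapsto H(t,x(t),v,\psi(t))$, a first-order Taylor expansion gives $B(h)=o(|u(t+h)-u(t)|)$ as $h\to0$; the difficulty is that, with $u$ merely continuous, the ratio $|u(t+h)-u(t)|/|h|$ need not remain bounded, so $B(h)/h\to0$ is not automatic. This is exactly the step demanding the most care, and it is where the fine structure of the extremal (and, if one wants full generality, the maximality rather than mere stationarity of $H$ in $u$) must be exploited; on subintervals where $u$ happens to be $C^1$ the term is handled at once, since there $\frac{\partial H}{\partial u}\cdot\dot{u}=0$ directly. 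Granting $B(h)/h\to0$ at each continuity point, $\mathcal{H}$ is differentiable there with $\frac{d\mathcal{H}}{dt}=\frac{\partial H}{\partial t}$, so the asserted equality holds on $[a,b]$ apart from the finitely many junction points, which is the intended meaning of the conclusion.
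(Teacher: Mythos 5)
First, a point of reference: the paper itself does not prove Theorem~\ref{thm DB-R OC} at all --- it is quoted as a known preliminary from Pontryagin et al.~\cite{Pontryagin} --- so you are reconstructing an argument the authors delegate to the literature. Judged on its own, your attempt has a genuine gap, and it is exactly the one you flag and then ``grant'': the control-increment term $B(h)$. From the stationarity condition \eqref{prob P opt condt} and $C^1$ data, Taylor expansion gives only $B(h)=o\bigl(|u(t+h)-u(t)|\bigr)$ (and even with $C^2$ data only $O\bigl(|u(t+h)-u(t)|^2\bigr)$), while piecewise continuity of $u$ permits moduli of continuity like $|u(t+h)-u(t)|\sim|h|^{1/2}$; nothing in the hypotheses you actually use excludes this, so $B(h)/h\to 0$ --- which is the entire content of the theorem beyond your $A(h)$ computation --- is assumed rather than proved. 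As written, your argument only covers controls that are piecewise $C^1$ (or situations where $H$ is $C^2$ in $u$ with $\frac{\partial^2 H}{\partial u^2}$ invertible, in which case stationarity at every $t$ forces $u$ to be Lipschitz and the naive chain-rule computation already suffices). Your parenthetical remark that ``maximality rather than mere stationarity'' must be exploited is the correct instinct, but the proof never does it.

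The standard way to close the hole --- and the way the cited source proves the result --- is an envelope (sandwich) argument that uses the maximum condition and never isolates $B(h)$. Since $u(t)$ maximizes $v\mapsto H(t,x(t),v,\psi(t))$ at each time, one has, for small $h$, the two-sided bound
\begin{multline*}
H(t+h,x(t+h),u(t),\psi(t+h))-H(t,x(t),u(t),\psi(t))\\
\le \mathcal{H}(t+h)-\mathcal{H}(t)\\
\le H(t+h,x(t+h),u(t+h),\psi(t+h))-H(t,x(t),u(t+h),\psi(t)),
\end{multline*}
where the upper bound is exactly your $A(h)$ and the lower bound is the same expression with the control frozen at $u(t)$ instead of $u(t+h)$ (for a minimizing extremal both inequalities reverse; the argument is unchanged). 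Each bound is a difference of $H$ along a $C^1$ arc with constant control, so your fundamental-theorem-of-calculus computation applies verbatim to both: dividing by $h$ (the inequalities reversing when $h<0$) and using continuity of $u$ at $t$, both bounds tend to $\bigl(\frac{\partial H}{\partial t}+\frac{\partial H}{\partial x}\cdot\dot{x}+\frac{\partial H}{\partial \psi}\cdot\dot{\psi}\bigr)(t,x(t),u(t),\psi(t))$, which equals $\frac{\partial H}{\partial t}(t,x(t),u(t),\psi(t))$ by the adjoint cancellation you already carried out. The squeeze then gives differentiability of $\mathcal{H}$ at every continuity point of $u$, with the asserted derivative, and no estimate on $B(h)$ is ever needed. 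In short: keep your $A(h)$ analysis, discard the additive decomposition $A+B$, and replace it by these two inequalities --- noting that this requires the maximum condition of \cite{Pontryagin}, which is strictly stronger than the stationarity condition \eqref{prob P opt condt} stated in Theorem~\ref{PMP}.
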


The famous Noether theorem \cite{Noether1918}
is another fundamental tool of the calculus of variations \cite{MR2098297},
optimal control \cite{Torres2001,Torres2002,Torres2004}
and modern theoretical physics \cite{MR3072684}.
It states that when an optimal control problem is invariant
under a one parameter family of transformations,
then there exists a corresponding conservation law:
an expression that is conserved along all the Pontryagin
extremals of the problem (see \cite{Torres2001,Torres2002,Torres2004} and references therein).
Here we use Noether's theorem as found in \cite{Torres2001}, which is formulated
for problems of optimal control in Lagrange form, that is, for problem
\eqref{problem P} with $\phi \equiv 0$. In order to apply the results of \cite{Torres2001}
to the Bolza problem \eqref{problem P}, we rewrite it in the following equivalent Lagrange form:
\begin{equation}
\label{eq:prob:Lag}
\begin{gathered}
\mathcal{I}(x(\cdot),y(\cdot),u(\cdot))
=\int_a^b \left[f(t,x(t),u(t))+ y(t)\right] dt \longrightarrow\text{extr},\\
\begin{cases}
\dot{x}(t)=g\left(t,x(t),u(t)\right),\\
\dot{y}(t)=0,
\end{cases}\\
x(a)=\alpha, \ y(a)= \frac{\phi(x(b))}{b-a}.
\end{gathered}
\end{equation}
Before presenting the Noether theorem for the optimal control problem \eqref{problem P},
we need to define the concept of invariance. Here we apply the notion of invariance
found in \cite{Torres2001} to the equivalent optimal control problem \eqref{eq:prob:Lag}.
In Definition~\ref{def inv OC} we use the little-o notation.

\begin{definition}[Invariance of problem \eqref{problem P} cf. \cite{Torres2001}]
\label{def inv OC}
Let $h^s$ be a one-parameter family of invertible $C^1$ maps
\begin{equation*}
\begin{gathered}
h^s:[a,b]\times \mathbb{R}^m\times \Omega \longrightarrow\mathbb{R}\times\mathbb{R}^m\times \mathbb{R}^r,\\
h^s(t,x,u)=\left(\mathcal{T}^s(t,x,u), \mathcal{X}^s(t,x,u),\mathcal{U}^s(t,x,u)\right),\\
h^0(t,x,u)=(t,x,u) \text{ for all } (t,x,u)\in [a,b]\times \mathbb{R}^m\times \Omega.
\end{gathered}
\end{equation*}
Problem \eqref{problem P} is said to be invariant under transformations $h^s$
if for all $(x(\cdot),u(\cdot))$ the following two conditions hold:
\begin{enumerate}
\item[(i)]
\begin{multline}
\label{eq inv OC 1}
\left[f \circ h^s(t,x(t),u(t))+\frac{\phi(x(b))}{b-a} + \xi s
+ o(s)\right]\frac{d\mathcal{T}^s}{dt}(t,x(t),u(t))\\
= f(t,x(t),u(t)) + \frac{\phi(x(b))}{b-a}
\end{multline}
for some constant $\xi$;

\item[(ii)]
\begin{equation}
\label{eq inv OC 2}
\frac{d\mathcal{X}^s}{dt}\left(t,x(t),u(t)\right)
=g\circ h^s(t,x(t),u(t))\frac{d\mathcal{T}^s}{dt}(t,x(t),u(t)).
\end{equation}
\end{enumerate}
\end{definition}

The next result can be easily obtained from the Noether theorem proved
by Torres in \cite{Torres2001} and Pontryagin's maximum principle (Theorem \ref{PMP}).

\begin{theorem}[Noether's theorem for the optimal control problem \eqref{problem P}]
\label{thm opt cont Noether}
If problem \eqref{problem P} is invariant
in the sense of Definition~\ref{def inv OC}, then the quantity
\begin{equation*}
(b-t) \xi + \psi(t) \cdot X(t,x(t),u(t))
-\left[H(t,x(t),u(t),\psi(t)) + \frac{\phi(x(b))}{b-a}\right]\cdot T(t,x(t),u(t))
\end{equation*}
is constant in $t$ along every Pontryagin extremal $(x(\cdot),u(\cdot), \psi(\cdot))$
of problem \eqref{problem P}, where $H$ is defined by \eqref{eq:def:Hamiltonian} and
\begin{equation*}
\begin{gathered}
T(t,x(t),u(t))=\frac{\partial \mathcal{T}^s}{\partial s}(t,x(t),u(t))\biggm\vert_{s=0},\\
X(t,x(t),u(t))=\frac{\partial \mathcal{X}^s}{\partial s}(t,x(t),u(t))\biggm\vert_{s=0}.
\end{gathered}
\end{equation*}
\end{theorem}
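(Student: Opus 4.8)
The plan is to exploit the equivalence between the Bolza problem \eqref{problem P} and its Lagrange reformulation \eqref{eq:prob:Lag}, and then to invoke the Noether theorem of \cite{Torres2001} together with Pontryagin's maximum principle (Theorem~\ref{PMP}). The reformulated problem \eqref{eq:prob:Lag} has enlarged state $(x,y)\in\mathbb{R}^m\times\mathbb{R}$, control $u$, running Lagrangian $\tilde{f}(t,x,y,u)=f(t,x,u)+y$ and dynamics $(\dot{x},\dot{y})=(g(t,x,u),0)$. Its Hamiltonian, with adjoint $(\psi,\psi_y)$, is therefore $\tilde{H}=f+y+\psi\cdot g+\psi_y\cdot 0=H+y$, where $H$ is the Hamiltonian \eqref{eq:def:Hamiltonian} of \eqref{problem P}. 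Since $\dot{y}=0$ together with the initial condition forces $y(t)\equiv\phi(x(b))/(b-a)$ along every admissible trajectory, we get $\tilde{H}=H+\phi(x(b))/(b-a)$.

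First I would extend the given family $h^s$, which acts on $(t,x,u)$, to a family acting on the full state by prescribing the transformation of the new variable as $\mathcal{Y}^s=y+\xi s+o(s)$, so that its infinitesimal generator is the constant $Y=\partial\mathcal{Y}^s/\partial s\vert_{s=0}=\xi$. With this choice I would check that invariance of \eqref{eq:prob:Lag} in the sense of \cite{Torres2001} is exactly Definition~\ref{def inv OC}: substituting $\tilde{f}\circ\tilde{h}^s=f\circ h^s+\mathcal{Y}^s$ and $y=\phi(x(b))/(b-a)$ into Torres's first invariance identity $\tilde{f}\circ\tilde{h}^s\,\frac{d\mathcal{T}^s}{dt}=\tilde{f}$ reproduces condition \eqref{eq inv OC 1}, while Torres's second identity splits into condition \eqref{eq inv OC 2} for the $x$-component and the trivially satisfied identity $d\mathcal{Y}^s/dt=0$ for the $y$-component (both sides vanish because $y$ is constant along trajectories).

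Next I would apply the Noether theorem of \cite{Torres2001} to the invariant Lagrange problem \eqref{eq:prob:Lag}, obtaining the conservation law
\begin{equation*}
\psi(t)\cdot X + \psi_y(t)\,\xi - \tilde{H}\,T = \text{constant}
\end{equation*}
along every Pontryagin extremal, where the arguments $(t,x(t),u(t))$ of $X$ and $T$ are suppressed. Comparing the necessary conditions of Theorem~\ref{PMP} for the two problems shows that the $(x,u,\psi)$-components of an extremal of \eqref{eq:prob:Lag} coincide with those of \eqref{problem P} (the optimality condition, the $x$-dynamics and the $\psi$-adjoint equation all involve $\tilde{H}$ only through $H$), so the $X$- and $T$-terms are already in the desired form once $\tilde{H}$ is replaced by $H+\phi(x(b))/(b-a)$. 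It remains only to identify $\psi_y$.

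For the last step I would use the adjoint equation of Theorem~\ref{PMP} for the $y$-variable: since $\partial\tilde{H}/\partial y=1$, one has $\dot{\psi}_y(t)=-1$, whence $\psi_y(t)=(b-t)+c$ for some constant $c$. Inserting this into the conservation law and absorbing the constant term $c\,\xi$ into the right-hand side produces exactly the asserted quantity; note that no transversality condition is needed, as the undetermined constant $c$ is harmless. The main point requiring care is the passage between the two formulations in the second step---in particular recognizing that the free constant $\xi$ in \eqref{eq inv OC 1} is precisely the generator of the auxiliary variable $y$, and that the $(b-t)\xi$ term in the conclusion originates from the linearly decreasing adjoint $\psi_y$; the remaining computations are routine.
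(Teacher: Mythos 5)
Your proposal is correct and follows exactly the route the paper indicates: the paper gives no detailed proof of Theorem~\ref{thm opt cont Noether}, stating only that it follows from the Noether theorem of \cite{Torres2001} and Pontryagin's maximum principle via the Lagrange reformulation \eqref{eq:prob:Lag}, which is precisely what you carry out. Your identification of $\xi$ as the generator of the auxiliary variable $y$ and of the $(b-t)\xi$ term as coming from the adjoint $\psi_y$ with $\dot{\psi}_y=-1$ supplies the details the paper leaves to the reader, and correctly so.
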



\section{Main results}
\label{sec:MainRes}

We begin by introducing some definitions for the
higher-order variational problem of Herglotz \eqref{PH_n}.

\begin{definition}[Admissible pair to problem \eqref{PH_n}]
We say that $(x(\cdot),z(\cdot))$ with $x(\cdot) \in PC^n([a,b];\mathbb{R}^m)$
and $z(\cdot) \in PC^1([a,b];\mathbb{R})$ is an admissible pair to problem
\eqref{PH_n} if it satisfies the equation
\begin{equation*}
\begin{gathered}
\dot{z}(t)=L(t,x(t),\dot{x}(t),\cdots, x^{(n)}(t),z(t)), \quad t \in [a,b],\\
\text{with } z(a)=\gamma \in \mathbb{R}.
\end{gathered}
\end{equation*}
\end{definition}

\begin{definition}[Extremizer to problem \eqref{PH_n}]
We say that an admissible pair $(x^*(\cdot),z^*(\cdot))$ is an extremizer
to problem \eqref{PH_n} if $z(b)-z^*(b)$ has the same signal for all admissible
pairs $(x(\cdot),z(\cdot))$ that satisfy $\|z-z^* \|_0< \epsilon$ and $\|x-x^* \|_0< \epsilon$
for some positive real $\epsilon$, where $\|y\|_0=\smash{\displaystyle\max_{a\leq t \leq b}}|y(t)|$.
\end{definition}

We now present a necessary condition for a pair $(x(\cdot),z(\cdot))$
to be a solution (extremizer) to problem \eqref{PH_n}. The following result generalizes
\cite{Simao+NM+Torres2013} by considering a more general class of functions.
To simplify notation, we use the operator $\langle\cdot,\cdot\rangle_n$,
$n \in \mathbb{N}$, defined by
$\langle x, z \rangle_n(t):=(t,x(t),\dot{x}(t), \ldots, x^{(n)}(t),z (t))$.
When there is no possibility of ambiguity, we sometimes suppress arguments.

\begin{theorem}[Higher-order Euler--Lagrange equation and transversality conditions for problem \eqref{PH_n}]
\label{thm H-O EL+TC}
If $(x(\cdot),z(\cdot))$ is an extremizer to problem \eqref{PH_n} that satisfies the inicial conditions
\begin{equation}
\label{init cond HO}
x(a)=\alpha_0, \ldots, x^{(n-1)}(a)=\alpha_{n-1}, \quad \alpha_0,\ldots, \alpha_{n-1} \in \mathbb{R},
\end{equation}
then the Euler--Lagrange equation
\begin{equation}
\label{H-O EL}
\sum_{j=0}^n(-1)^j\frac{d^j}{dt^j}\left(\psi_z(t)
\frac{\partial L}{\partial x^{(j)}}\langle x, z \rangle_n(t)\right)=0
\end{equation}
holds, for $t \in [a,b]$, where
\begin{equation}
\label{multipliers}
\begin{cases}
\psi_z(t)=e^{\int_t^b\frac{\partial L}{\partial z}\langle x, z \rangle_n(\theta)d\theta}\\
\psi_j(t)=\sum_{i=0}^{n-j}(-1)^{i+1}\frac{d^i}{dt^i}\left(\psi_z(t)
\frac{\partial L}{\partial x^{(i+j)}}\langle x, z \rangle_n(t)\right),
\quad j=1,\ldots,n.
\end{cases}
\end{equation}
Moreover, the following transversality conditions hold:
\begin{equation}
\label{H-O tr condt}
\psi_j(b)=0, \quad j=1, \ldots, n.
\end{equation}
\end{theorem}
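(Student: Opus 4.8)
The plan is to recast the higher-order Herglotz problem \eqref{PH_n} as a first-order optimal control problem of the form \eqref{problem P} and then read off the conclusions from Pontryagin's maximum principle (Theorem~\ref{PMP}). First I would introduce the state variables $x_0 = x$, $x_1 = \dot x$, \ldots, $x_{n-1} = x^{(n-1)}$ together with $z$, and take the highest derivative $u = x^{(n)}$ as the control. The defining equation for $z$ together with the chain $\dot x_i = x_{i+1}$ then turns \eqref{PH_n} into the control system
\[
\dot x_i = x_{i+1}\ (i = 0,\ldots,n-2), \qquad \dot x_{n-1} = u, \qquad \dot z = L(t,x_0,\ldots,x_{n-1},u,z),
\]
with fixed initial data $x_i(a) = \alpha_i$ and $z(a) = \gamma$ coming from \eqref{init cond HO}. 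Since maximizing or minimizing $z(b)$ is a pure terminal payoff, this is a Bolza problem \eqref{problem P} with running cost $f \equiv 0$ and $\phi$ the projection onto the $z$-coordinate; all the regularity hypotheses of \eqref{problem P} are inherited from $L \in C^1$. Writing the costates as $(p_0,\ldots,p_{n-1},p_z)$, the Hamiltonian \eqref{eq:def:Hamiltonian} becomes $H = \sum_{i=0}^{n-2} p_i\cdot x_{i+1} + p_{n-1}\cdot u + p_z L$.

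Next I would apply Theorem~\ref{PMP}. The adjoint equation \eqref{prob P adj syst} for $p_z$ is $\dot p_z = -p_z\,\partial L/\partial z$, which together with the transversality value $p_z(b) = 1$ (the $z$-component of $\operatorname{grad}\phi$) integrates to $p_z(t) = \psi_z(t)$ exactly as in \eqref{multipliers}. Because $\phi$ depends only on $z$, the transversality condition \eqref{prob P tr condt} also forces $p_i(b) = 0$ for $i = 0,\ldots,n-1$. The optimality condition \eqref{prob P opt condt}, $\partial H/\partial u = 0$, gives $p_{n-1} = -\psi_z\,\partial L/\partial x^{(n)}$, which is precisely $\psi_n$ from \eqref{multipliers}; this is the base of the induction below. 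The remaining adjoint equations read $\dot p_0 = -\psi_z\,\partial L/\partial x$ and $\dot p_j = -p_{j-1} - \psi_z\,\partial L/\partial x^{(j)}$ for $1\le j\le n-1$.

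I would then prove $p_{j-1} = \psi_j$ for every $j = 1,\ldots,n$ by downward induction on $j$, the case $j=n$ being the optimality condition just noted. For the inductive step, solving the adjoint equation for $p_{j-1}$ gives $p_{j-1} = -\dot p_j - \psi_z\,\partial L/\partial x^{(j)}$; substituting $p_j = \psi_{j+1}$, differentiating the sum defining $\psi_{j+1}$ and shifting the summation index by one reproduces, after absorbing the extra term $-\psi_z\,\partial L/\partial x^{(j)}$ as the $i=0$ summand, exactly the formula \eqref{multipliers} for $\psi_j$. Feeding $p_0 = \psi_1$ into the last adjoint equation $\dot p_0 = -\psi_z\,\partial L/\partial x$ and performing the same index shift turns the left-hand side into $\sum_{k=1}^n (-1)^k \frac{d^k}{dt^k}\bigl(\psi_z\,\partial L/\partial x^{(k)}\bigr)$; moving the right-hand term over as the $k=0$ summand yields the Euler--Lagrange equation \eqref{H-O EL}. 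Finally, the identification $\psi_j = p_{j-1}$ together with $p_{j-1}(b) = 0$ gives the transversality conditions \eqref{H-O tr condt}.

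The routine but delicate part is the index bookkeeping in the induction, namely matching the shifted sums against the prescribed form of the $\psi_j$. I also expect the only genuine subtlety to be legitimacy in the piecewise class: Pontryagin's principle guarantees the costates lie in $PC^1$, and it is the identification $p_{j-1} = \psi_j$ that retroactively gives meaning to the higher derivatives appearing in \eqref{multipliers}, so that no smoothness beyond $x(\cdot)\in PC^n$ and $z(\cdot)\in PC^1$ is required.
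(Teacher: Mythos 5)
Your proposal is correct and follows essentially the same route as the paper: the same reformulation of \eqref{PH_n} as the optimal control problem with states $(x_0,\ldots,x_{n-1},z)$ and control $u=x^{(n)}$, the same application of Pontryagin's maximum principle with $f\equiv 0$ and $\phi$ the projection onto $z$, and the same backward recursion through the adjoint system (which the paper carries out informally for $\psi_{n-1},\psi_{n-2},\psi_{n-3}$ and then extends ``by the same argument,'' exactly the downward induction you formalize). The only differences are cosmetic: your costate labels are shifted by one index, and you make explicit the remark about $PC^1$ regularity of the costates that the paper leaves implicit.
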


\begin{proof}
Observe that the higher-order problem of Herglotz \eqref{PH_n} is a particular case
of problem \eqref{problem P} when we consider a $n+1$ coordinates state variable
$(x_0,x_1,\ldots x_{n-1},z)$ with $x_0=x$, $x_1=\dot{x}$, \ldots, $x_{n-1}=x^{(n-1)}$,
a control $u=x^{(n)}$ and choose $f\equiv 0$ and $\phi(x_0, \ldots, x_{n-1},z)=z$.
The higher-order problem of Herglotz can now be described
as an optimal control problem as follows:
\begin{equation}
\label{PH_n_OC}
\begin{gathered}
z(b)\longrightarrow \text{extr}\\
\begin{cases}
\dot{x}_0(t)=x_1(t),\\
\dot{x}_1(t)=x_2(t),\\
\dot{x}_2(t)=x_3(t),\\
\quad\quad\quad \vdots\\
\dot{x}_{n-2}(t)=x_{n-1}(t),\\
\dot{x}_{n-1}(t)=u(t),\\
\dot{z}(t)=L(t,x_0(t), \ldots, x_{n-1}(t),u(t),z(t)),
\end{cases}\\
z(a)=\gamma,  \quad \gamma \in \mathbb{R}.
\end{gathered}
\end{equation}
Observe that since we consider problem \eqref{PH_n} subject
to the initial conditions \eqref{init cond HO}, then
$x_0(a)=\alpha_0,\ldots, x_{n-1}(a)=\alpha_{n-1}$	
with $\alpha_0, \ldots, \alpha_{n-1}$ given real numbers.
From Pontryagin's Maximum Principle for problem \eqref{problem P}
(Theorem~\ref{PMP}) there are $(\psi_1, \ldots, \psi_{n},\psi_z)
\in PC^1([a,b];\mathbb{R}^{n\times m+1})$ such that the following conditions hold:
\begin{itemize}

\item the optimality condition
\begin{equation}
\label{opt_cond_HO}
\frac{\partial H}{\partial u}(t,x_0(t),\ldots, x_{n-1}(t),u(t),z(t),
\psi_1(t),\ldots,\psi_{n}(t),\psi_z(t))=0,
\end{equation}

\item the adjoint system
\begin{equation}
\label{adj_syst_HO}
\begin{cases}
\dot{x}_{j-1}(t)=\frac{\partial H}{\partial \psi_{j}}(t,x_0(t),
\ldots, x_{n-1}(t),u(t),z(t),\psi_1(t),\ldots,\psi_{n}(t),\psi_z(t)),
\  j=1,\ldots,n\\
\dot{\psi}_1(t)=-\frac{\partial H}{\partial x_0}(t,x_0(t),
\ldots, x_{n-1}(t),u(t),z(t),\psi_1(t),\ldots,\psi_{n}(t),\psi_z(t))\\
\dot{\psi}_j(t)=-\frac{\partial H}{\partial x_{j-1}}(t,x_0(t),
\ldots, x_{n-1}(t),u(t),z(t),\psi_1(t),\ldots,\psi_{n}(t),\psi_z(t)),
\  j=2,\ldots,n\\
\dot{\psi}_z(t)=-\frac{\partial H}{\partial z}(t,x_0(t),
\ldots, x_{n-1}(t),u(t),z(t),\psi_1(t),\ldots,\psi_{n}(t),\psi_z(t))
\end{cases}
\end{equation}

\item the transversality conditions
\begin{equation}
\label{tr_cond_HO}
\begin{cases}
\psi_i(b)=0, \quad j=1, \ldots, n,\\
\psi_z(b)=1,
\end{cases}
\end{equation}
\end{itemize}
where the Hamiltonian $H$ is defined by
\begin{multline*}
H(t,x_0, \ldots, x_{n-1},u,z,\psi_1,\ldots,\psi_{n},\psi_z)\\
=\psi_1\cdot x_1+\ldots+\psi_{n-1}\cdot x_{n-1}+\psi_{n}\cdot u
+\psi_z\cdot L(t,x_0, \ldots, x_{n-1},u,z).
\end{multline*}
Observe that the optimality condition \eqref{opt_cond_HO} implies that
$\psi_{n}=-\psi_z\frac{\partial L}{\partial u}$ and that the adjoint system
\eqref{adj_syst_HO} implies that
\begin{equation*}
\begin{cases}
\dot{\psi}_1=-\psi_z\frac{\partial L}{\partial x_0},\\
\dot{\psi}_j=-\psi_{j-1}-\psi_z\frac{\partial L}{\partial x_{j-1}},
\quad \text{ for } j=2,\ldots,n,\\
\dot{\psi}_z=-\psi_z\frac{\partial L}{\partial z}.
\end{cases}
\end{equation*}
Hence, $\psi_z$ is solution of a first-order linear differential equation,
which is solved using an integrand factor to find that
$\psi_z(t)=ke^{-\int_a^t\frac{\partial L}{\partial z}d\theta}$ with $k$ a constant.
From the last transversality condition in \eqref{tr_cond_HO}, we obtain that
$k=e^{\int_a^b\frac{\partial L}{\partial z}d\theta}$ and, consequently,
$$
\psi_z(t)=e^{\int_t^b\frac{\partial L}{\partial z}d\theta}.
$$
Note also that for $j=n$ we obtain
$\dot{\psi}_{n}=-\psi_{n-1}-\psi_z\frac{\partial L}{\partial x_{n-1}}$,
which is equivalent to
\begin{equation*}
\psi_{n-1}=\frac{d}{dt}\left(\psi_z \frac{\partial L}{\partial x_n}\right)
-\psi_z\frac{\partial L}{\partial x_{n-1}}.
\end{equation*}
By differentiation of the previous expression, we obtain that
\begin{equation*}
\dot{\psi}_{n-1}=\frac{d^2}{dt^2}\left(\psi_z \frac{\partial L}{\partial x_n}\right)
-\frac{d}{dt}\left(\psi_z\frac{\partial L}{\partial x_{n-1}}\right)
\end{equation*}
and noting that $\dot{\psi}_{n-1}=-\psi_{n-2}-\psi_z\frac{\partial L}{\partial x_{n-2}}$,
we find an expression for $\psi_{n-2}$:
$$
\psi_{n-2}=-\frac{d^2}{dt^2}\left(\psi_z \frac{\partial L}{\partial x_n}\right)
+\frac{d}{dt}\left(\psi_z\frac{\partial L}{\partial x_{n-1}}\right)
-\psi_z\frac{\partial L}{\partial x_{n-2}}.
$$
Similarly, we obtain that
$$
\psi_{n-3}=\frac{d^3}{dt^3}\left(\psi_z \frac{\partial L}{\partial x_n}\right)
-\frac{d^2}{dt^2}\left(\psi_z\frac{\partial L}{\partial x_{n-1}}\right)
+\frac{d}{dt}\left(\psi_z\frac{\partial L}{\partial x_{n-2}}\right)
-\psi_z\frac{\partial L}{\partial x_{n-3}}.
$$
Applying the same argument to the next multipliers and noting that
$\psi_1=-\dot{\psi}_2-\psi_z\frac{\partial L}{\partial x_1}$, we have
\begin{multline*}
\dot{\psi}_1=-\psi_z\frac{\partial L}{\partial x_0}\\
=(-1)^{n}\frac{d^{n}}{dt^{n}}\left(\psi_z\frac{\partial L}{\partial x_{n}}\right)
+(-1)^{n-1}\frac{d^{n-1}}{dt^{n-1}}\left(\psi_z\frac{\partial L}{\partial x_{n-1}}\right)
+\cdots-\frac{d}{dt}\left(\psi_z\frac{\partial L}{\partial x_1}\right)
\end{multline*}
or, equivalently,
\begin{equation*}
(-1)^{n}\frac{d^{n}}{dt^{n}}\left(\psi_z\frac{\partial L}{\partial x_{n}}\right)
+(-1)^{n-1}\frac{d^{n-1}}{dt^{n-1}}\left(\psi_z\frac{\partial L}{\partial x_{n-1}}\right)
+\cdots-\frac{d}{dt}\left(\psi_z\frac{\partial L}{\partial x_1}\right)
+\psi_z\frac{\partial L}{\partial x_0}=0.
\end{equation*}
Rewriting previous equation in terms of problem \eqref{PH_n}
and in the form of a summation, one gets
$$
\sum_{j=0}^n(-1)^j\frac{d^j}{dt^j}\left(\psi_z\frac{\partial L}{\partial x^{(j)}}\right)=0
$$
as intended. Observe also that from the previous argumentation
we were also able to derive expressions for the multipliers:
\begin{equation*}
\psi_j=\sum_{i=0}^{n-j}(-1)^i\frac{d^i}{dt^i}\left(
-\psi_z\frac{\partial L}{\partial x^{(i+j)}}\right),
\quad j=1,\ldots,n,
\end{equation*}
which together with \eqref{tr_cond_HO} leads to the transversality conditions
\begin{equation*}
\sum_{i=0}^{n-j}(-1)^i\frac{d^i}{dt^i}\left(\psi_z
\frac{\partial L}{\partial x^{(i+j)}}\right)\bigg\vert_{t=b}=0,
\quad j=1, \ldots, n.
\end{equation*}
This concludes the proof.
\end{proof}

\begin{definition}[Extremal to problem \eqref{PH_n}]
We say that an admissible pair $(x(\cdot), z(\cdot))$ is an extremal
to problem \eqref{PH_n} if it satisfies the Euler--Lagrange equation
\eqref{H-O EL} and the transversality conditions \eqref{H-O tr condt}.
\end{definition}

Next we present two new important results: the DuBois--Reymond condition
and the Noether theorem for the higher-order variational problem of Herglotz \eqref{PH_n}.

\begin{theorem}[DuBois--Reymond condition for problem \eqref{PH_n}]
\label{thm DB-R PH_n}
If $(x(\cdot), z(\cdot))$ is an extremal to problem \eqref{PH_n}, then
\begin{equation*}
\frac{d}{dt}\left(\sum_{i=1}^n\psi_i(t) x^{(i)}(t)
+\psi_z(t) L\langle x, z \rangle_n(t) \right)
=\psi_z(t)\frac{\partial L}{\partial t}\langle x, z \rangle_n(t),
\end{equation*}
where $\psi_z(t)$ and $\psi_i(t)$  are defined in \eqref{multipliers}.
\end{theorem}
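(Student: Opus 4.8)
The plan is to reuse the reformulation of \eqref{PH_n} as the optimal control problem \eqref{PH_n_OC} introduced in the proof of Theorem~\ref{thm H-O EL+TC} and then to apply the DuBois--Reymond condition of optimal control (Theorem~\ref{thm DB-R OC}). In that reformulation the state is $(x_0,\ldots,x_{n-1},z)$ with $x_j=x^{(j)}$, the control is $u=x^{(n)}$, and the Hamiltonian is
\begin{equation*}
H=\psi_1 x_1+\cdots+\psi_{n-1} x_{n-1}+\psi_n u+\psi_z L(t,x_0,\ldots,x_{n-1},u,z).
\end{equation*}
The crucial observation, which shapes the whole argument, is that once $x_j$ and $u$ are written back in terms of $x^{(j)}$, this Hamiltonian evaluated along the extremal is \emph{exactly} the quantity sitting inside the total derivative in the statement:
\begin{equation*}
H=\sum_{i=1}^n \psi_i(t)\, x^{(i)}(t)+\psi_z(t)\, L\langle x,z\rangle_n(t).
\end{equation*}
Hence the theorem will follow at once from the identity $\frac{dH}{dt}=\frac{\partial H}{\partial t}$ of Theorem~\ref{thm DB-R OC}, provided the extremal can be lifted to a Pontryagin extremal of \eqref{PH_n_OC}.

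Establishing this lift is the first substantive step. I would take the multipliers $\psi_z,\psi_1,\ldots,\psi_n$ defined in \eqref{multipliers} and verify, one by one, the three defining conditions of a Pontryagin extremal for \eqref{PH_n_OC}. The optimality condition $\partial H/\partial u=0$ amounts to $\psi_n=-\psi_z\,\partial L/\partial x^{(n)}$, which is precisely the $j=n$ case of \eqref{multipliers}. The adjoint equations $\dot\psi_j=-\psi_{j-1}-\psi_z\,\partial L/\partial x^{(j-1)}$ for $j=2,\ldots,n$, together with $\dot\psi_z=-\psi_z\,\partial L/\partial z$, are recovered by differentiating the closed-form expressions \eqref{multipliers} and telescoping the resulting sums; in particular $\dot\psi_z=-\psi_z\,\partial L/\partial z$ is immediate from $\psi_z(t)=e^{\int_t^b\frac{\partial L}{\partial z}\,d\theta}$. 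The remaining adjoint equation $\dot\psi_1=-\psi_z\,\partial L/\partial x_0$ is, after the same differentiation, nothing but the Euler--Lagrange equation \eqref{H-O EL}, which holds because $(x,z)$ is an extremal. Finally, the transversality conditions of the control problem, $\psi_j(b)=0$ for $j=1,\ldots,n$ and $\psi_z(b)=1$, are exactly \eqref{H-O tr condt} together with the trivial evaluation $\psi_z(b)=e^0=1$.

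With the Pontryagin extremal in hand, the conclusion is immediate: Theorem~\ref{thm DB-R OC} gives $\frac{dH}{dt}=\frac{\partial H}{\partial t}$, and since the only explicit $t$-dependence of $H$ sits in the Lagrangian (the terms $\psi_i x_i$ and $\psi_n u$ carry none), we have $\frac{\partial H}{\partial t}=\psi_z\,\partial L/\partial t$. Rewriting $H$ in the original variables as above yields
\begin{equation*}
\frac{d}{dt}\!\left(\sum_{i=1}^n \psi_i(t)\,x^{(i)}(t)+\psi_z(t)\,L\langle x,z\rangle_n(t)\right)=\psi_z(t)\,\frac{\partial L}{\partial t}\langle x,z\rangle_n(t),
\end{equation*}
as claimed. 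I expect no genuine analytical obstacle here; the only delicate point is purely organizational, namely confirming that an extremal in the sense of satisfying \eqref{H-O EL} and \eqref{H-O tr condt} really does furnish a full Pontryagin extremal---in particular that the single scalar Euler--Lagrange equation encodes exactly the first adjoint equation, while the higher adjoint equations and the optimality condition are built into the very definition \eqref{multipliers} of the multipliers. Care must also be taken with the piecewise character of the functions, so that the identities are read at points of differentiability and the total derivative is understood in the appropriate one-sided sense at the finitely many corners.
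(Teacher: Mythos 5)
Your proposal is correct and follows exactly the paper's route: the paper's own proof is the one-line instruction ``rewrite \eqref{PH_n} as the optimal control problem \eqref{PH_n_OC} and apply Theorem~\ref{thm DB-R OC}'', and your argument is precisely that, with the implicit details made explicit. In particular, your verification that the multipliers \eqref{multipliers} turn an extremal of \eqref{PH_n} into a genuine Pontryagin extremal of \eqref{PH_n_OC} (optimality condition, adjoint system via telescoping, transversality) is exactly the bookkeeping the paper leaves to the reader, relying on the computations already done in the proof of Theorem~\ref{thm H-O EL+TC}.
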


\begin{proof}
Rewrite \eqref{PH_n} as optimal control problem \eqref{PH_n_OC}
and apply Theorem~\ref{thm DB-R OC}.
\end{proof}

\begin{definition}[Invariance for problem \eqref{PH_n}]
\label{def inv_PH_n}
Let $h^s$ be a one-parameter family of invertible $C^1$ maps
$h^s:[a,b]\times \mathbb{R}^m \times \mathbb{R}
\longrightarrow \mathbb{R}\times \mathbb{R}^m \times \mathbb{R}$,
\begin{equation*}
\begin{gathered}
h^s(t,x(t),z(t))=(\mathcal{T}^s\langle x,z \rangle_n(t),
\mathcal{X}^s\langle x,z \rangle_n(t),\mathcal{Z}^s\langle x,z \rangle_n(t)),\\
h^0(t,x,z)=(t,x,z), \quad \forall (t,x,z) \in [a,b]\times \mathbb{R}^m  \times \mathbb{R}.
\end{gathered}
\end{equation*}
Problem \eqref{PH_n} is said to be invariant under the transformations $h^s$
if for all admissible pairs $(x(\cdot),z(\cdot))$ the following two conditions hold:
\begin{enumerate}
\item[(i)]
\begin{equation}
\label{eq inv PH_n_1}
\left(\frac{z(b)}{b-a}+\xi s + o(s)\right)\frac{d\mathcal{T}^s}{dt}\langle x,z \rangle_n(t)
=\frac{z(b)}{b-a}, \ \text{for some constant } \xi;
\end{equation}
	
\item[(ii)]
\begin{multline}
\label{eq inv PH_n_2}
\frac{d \mathcal{Z}^s}{dt}\langle x,z \rangle_n(t)
= L\left(\mathcal{T}^s\langle x,z \rangle_n(t),\mathcal{X}^s\langle x,z \rangle_n(t),
\frac{d\mathcal{X}^s}{d\mathcal{T}^s}\langle x,z \rangle_n(t),\ldots\right.\\
\left.\ldots,\frac{d^n\mathcal{X}^s}{d(\mathcal{T}^s)^n}\langle x,z \rangle_n(t),
\mathcal{Z}^s\langle x,z \rangle_n(t)\right)\frac{d\mathcal{T}^s}{dt}\langle x,z \rangle_n(t),
\end{multline}
\end{enumerate}
where
\begin{equation}\label{deriv X^s}
\frac{d\mathcal{X}^s}{d\mathcal{T}^s}\langle x,z \rangle_n(t)
=\frac{\frac{d\mathcal{X}^s}{dt}\langle x,z \rangle_n(t)}{
\frac{d\mathcal{T}^s}{dt}\langle x,z \rangle_n(t)} \text{ and }
\frac{d^i\mathcal{X}^s}{d(\mathcal{T}^s)^i}\langle x,z \rangle_n(t)
=\frac{\frac{d}{dt}\left(\frac{d^{i-1}\mathcal{X}^s}{d(\mathcal{T}^s)^{i-1}}
\langle x,z \rangle_n(t)\right)}{\frac{d\mathcal{T}^s}{dt}\langle x,z \rangle_n(t)}
\end{equation}
for $i=2,\ldots,n$.
\end{definition}

Next we present the main result of this paper.

\begin{theorem}[Noether's Theorem for problem \eqref{PH_n}]
\label{thm noether PH_n}
If problem \eqref{PH_n} is invariant in the sense
of Definition~\ref{def inv_PH_n}, then the quantity
\begin{multline*}
\sum_{i=1}^n\psi_i(t) X_{i-1}\langle x,z \rangle_n(t)
+\psi_z(t) Z\langle x,z \rangle_n(t)\\
-\left(\sum_{i=1}^n\psi_i(t) x^{(i)}(t)
+\psi_z(t)L\langle x,z \rangle_n(t)\right)T\langle x,z \rangle_n(t)
\end{multline*}
is constant in $t$ along every extremal to problem \eqref{PH_n}, where
\begin{equation*}
\begin{gathered}
T=\frac{\partial \mathcal{T}^s}{\partial s}\biggm\vert_{s=0},
\quad X_0=\frac{\partial \mathcal{X}^s}{\partial s}\biggm\vert_{s=0},
\quad Z=\frac{\partial \mathcal{Z}^s}{\partial s}\biggm\vert_{s=0},\\
X_i=\frac{d }{dt}X_{i-1}-x^{(i)}\frac{d}{dt}\left(\frac{\partial\mathcal{T}^s}{\partial s}
\bigg\vert_{s=0}\right)
\quad \text{for } i=1,\ldots, n-1,
\end{gathered}
\end{equation*}
$\psi_i$ is defined by \eqref{multipliers}
and $\psi_z(t)=e^{\int_t^b\frac{\partial L}{\partial z}d\theta}$.
\end{theorem}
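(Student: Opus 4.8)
The plan is to repeat the device used in the proofs of Theorems~\ref{thm H-O EL+TC} and~\ref{thm DB-R PH_n}: rewrite \eqref{PH_n} as the optimal control problem \eqref{PH_n_OC}, lift the family $h^s$ to a one-parameter family of transformations of the augmented phase space, check that invariance in the sense of Definition~\ref{def inv_PH_n} is exactly invariance of \eqref{PH_n_OC} in the sense of Definition~\ref{def inv OC}, and then read the conclusion off Theorem~\ref{thm opt cont Noether}. Concretely, on the state $(x_0,\dots,x_{n-1},z)$ and control $u$ I would define the lifted transformation by sending $x_i$ to $\frac{d^i\mathcal{X}^s}{d(\mathcal{T}^s)^i}$ for $i=0,\dots,n-1$ (with $\frac{d^0\mathcal{X}^s}{d(\mathcal{T}^s)^0}=\mathcal{X}^s$), sending $u$ to $\frac{d^n\mathcal{X}^s}{d(\mathcal{T}^s)^n}$, sending $z$ to $\mathcal{Z}^s$, and keeping the time transformation $\mathcal{T}^s$.

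First I would verify the two conditions of Definition~\ref{def inv OC} for \eqref{PH_n_OC}. Since here $f\equiv 0$ and $\phi(x_0,\dots,x_{n-1},z)=z$, condition \eqref{eq inv OC 1} collapses to \eqref{eq inv PH_n_1}. Condition \eqref{eq inv OC 2} splits coordinate by coordinate: for $x_0,\dots,x_{n-1}$ it reads $\frac{d}{dt}\big(\frac{d^{i}\mathcal{X}^s}{d(\mathcal{T}^s)^{i}}\big)=\frac{d^{i+1}\mathcal{X}^s}{d(\mathcal{T}^s)^{i+1}}\frac{d\mathcal{T}^s}{dt}$, which is nothing but the recursion \eqref{deriv X^s} and hence holds automatically (the case $i=n-1$ producing the control component), while for the $z$-coordinate it is exactly \eqref{eq inv PH_n_2}. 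Thus invariance of \eqref{PH_n} transfers verbatim to invariance of \eqref{PH_n_OC}. Differentiating the identity $\frac{d^{i}\mathcal{X}^s}{d(\mathcal{T}^s)^{i}}=\frac{d}{dt}\big(\frac{d^{i-1}\mathcal{X}^s}{d(\mathcal{T}^s)^{i-1}}\big)/\frac{d\mathcal{T}^s}{dt}$ with respect to $s$ at $s=0$, using $\mathcal{T}^0=t$, $\frac{d\mathcal{T}^0}{dt}=1$ and exchanging the $s$- and $t$-derivatives, reproduces the asserted generator recursion $X_i=\frac{d}{dt}X_{i-1}-x^{(i)}\dot T$.

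Next I would invoke Theorem~\ref{thm opt cont Noether}. With adjoint vector $(\psi_1,\dots,\psi_n,\psi_z)$ and state generator $(X_0,\dots,X_{n-1},Z)$ one has $\psi\cdot X=\sum_{i=1}^n\psi_i X_{i-1}+\psi_z Z$, while along the extremal the Hamiltonian of \eqref{PH_n_OC} evaluates to $H=\sum_{i=1}^n\psi_i x^{(i)}+\psi_z L$ and $\phi(x(b))/(b-a)=z(b)/(b-a)$. Theorem~\ref{thm opt cont Noether} then says that $(b-t)\xi+\sum_{i=1}^n\psi_i X_{i-1}+\psi_z Z-\big[\sum_{i=1}^n\psi_i x^{(i)}+\psi_z L+\frac{z(b)}{b-a}\big]T$ is constant in $t$ along every extremal.

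The hard part is passing from this expression to the one in the statement, i.e.\ disposing of the surplus terms $(b-t)\xi-\frac{z(b)}{b-a}T$; this is the step that uses \eqref{eq inv PH_n_1} substantively rather than merely matching definitions. Differentiating \eqref{eq inv PH_n_1} with respect to $s$ at $s=0$ gives $\xi+\frac{z(b)}{b-a}\dot T=0$, that is, $\xi=-\frac{z(b)}{b-a}\dot T$ pointwise in $t$. Hence $\frac{d}{dt}\big[(b-t)\xi-\frac{z(b)}{b-a}T\big]=-\xi-\frac{z(b)}{b-a}\dot T=0$, so the surplus terms are constant in $t$. Subtracting this constant from the conserved quantity of Theorem~\ref{thm opt cont Noether} leaves precisely the quantity in the statement, which is therefore constant along every extremal. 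I expect this reconciliation to be the only genuinely delicate point; everything else is a routine translation between the higher-order and control formulations.
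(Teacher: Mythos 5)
Your proof is correct and follows essentially the same route as the paper: rewrite \eqref{PH_n} as the optimal control problem \eqref{PH_n_OC}, check that invariance in the sense of Definition~\ref{def inv_PH_n} transfers to invariance in the sense of Definition~\ref{def inv OC}, apply Theorem~\ref{thm opt cont Noether}, and derive the generator recursion for the $X_i$. The only (cosmetic) difference lies in disposing of the surplus term $(b-t)\xi-\frac{z(b)}{b-a}T$: you differentiate \eqref{eq inv PH_n_1} with respect to $s$ and show the $t$-derivative of the surplus vanishes, whereas the paper integrates that identity from $a$ to $t$ before differentiating in $s$ --- the same computation performed in the opposite order.
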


\begin{proof}
As before, we deal with problem \eqref{PH_n} in its equivalent
optimal control form \eqref{PH_n_OC}. We now prove that if problem
\eqref{PH_n} is invariant in the sense of Definition~\ref{def inv_PH_n},
then \eqref{PH_n_OC} is invariant in the sense of Definition~\ref{def inv OC}.
First, observe that if \eqref{eq inv PH_n_1} holds, then \eqref{eq inv OC 1}
holds for \eqref{PH_n_OC} with $f\equiv 0$ and $\phi(x_0, \ldots, x_{n-1},z)=z$.
Second, note that the control system of \eqref{PH_n_OC} defines
$\mathcal{U}^s:=\frac{d\mathcal{X}_{n-1}^s}{d\mathcal{T}^s}$
and $\mathcal{X}_i^s:=\frac{d\mathcal{X}_{i-1}^s}{d\mathcal{T}^s}$, that is,
\begin{equation}
\label{eq inv}
\begin{cases}
\frac{d\mathcal{X}_{i-1}^s}{dt}\langle x,z \rangle_n(t)
=\mathcal{X}_i^s\langle x,z \rangle_n(t)\frac{d\mathcal{T}^s}{dt}\langle x,z
\rangle_n(t), \quad i=1, \ldots, n-1,\\
\frac{d \mathcal{X}_{n-1}^s}{dt}\langle x,z \rangle_n(t)
=\mathcal{U}^s\langle x,z \rangle_n(t)\frac{d\mathcal{T}^s}{dt}\langle x,z \rangle_n(t).
\end{cases}
\end{equation}
This means that if \eqref{eq inv PH_n_2} and \eqref{eq inv} hold, then there
is also invariance in the sense of \eqref{eq inv OC 2} and problem \eqref{PH_n_OC}
is invariant in the sense of Definition~\ref{def inv OC}. This invariance gives
conditions to apply Theorem~\ref{thm opt cont Noether} to problem \eqref{PH_n_OC},
which assures that the quantity
\begin{multline*}
(b-t)\xi+\sum_{i=1}^{n}\psi_i(t) X_{i-1}\langle x,z \rangle_n(t)
+ \psi_z(t) Z\langle x,z \rangle_n(t)\\
- \left[\sum_{i=1}^{n}\psi_i(t) x_i(t)+\psi_z(t) L\langle x,z \rangle_n(t)
+ \frac{\phi(x(b))}{b-a}\right]T\langle x,z \rangle_n(t),
\end{multline*}
where $X_i=\frac{\partial}{\partial s} \frac{d^{i}\mathcal{X}^s}{d(\mathcal{T}^s)^{i}}\Big\vert_{s=0}$
is constant in $t$ along every Pontryagin extremal of problem \eqref{PH_n_OC}. This means that the quantity
\begin{multline*}
(b-t)\xi-\frac{\phi(x(b))}{b-a}T\langle x,z \rangle_n(t)
+\sum_{i=1}^{n}\psi_i(t) X_{i-1}\langle x,z \rangle_n(t)
+ \psi_z(t) Z\langle x,z \rangle_n(t)\\
- \left[\sum_{i=1}^{n}\psi_i(t) x^{(i)}(t)+\psi_z(t) L\langle x,z \rangle_n(t) \right]
T\langle x,z \rangle_n(t)
\end{multline*}
is constant in $t$ along every extremal of problem \eqref{PH_n}.
Observe that $X_0=\frac{\partial \mathcal{X}^s}{\partial s}\big\vert_{s=0}$,
which together with \eqref{deriv X^s} leads to
\begin{align*}
X_i&=\frac{\partial}{\partial s} \frac{d^{i}\mathcal{X}^s}{d(\mathcal{T}^s)^{i}}\bigg\vert_{s=0}
=\frac{\partial}{\partial s} \left( \frac{\frac{d}{dt}\left(
\frac{d^{i-1}\mathcal{X}^s}{d(\mathcal{T}^s)^{i-1}}\right)}{
\frac{d\mathcal{T}^s}{dt}}\right)\Bigg\vert_{s=0}\\
&= \frac{d}{dt}\left(\frac{\partial}{\partial s}
\frac{d^{i-1}\mathcal{X}^s}{d(\mathcal{T}^s)^{i-1}}\bigg\vert_{s=0}\right)-x^{(i)}
\frac{d}{dt}\left(\frac{\partial\mathcal{T}^s}{\partial s} \bigg\vert_{s=0}\right)\\
&=\frac{d }{dt}X_{i-1}-x^{(i)}\frac{d}{dt}\left(
\frac{\partial\mathcal{T}^s}{\partial s} \bigg\vert_{s=0}\right).
\end{align*}
To end the proof we only need to prove that the quantity
\begin{equation}
\label{eq:constant}
(b-t)\xi - \frac{z(b)}{b-a}T\langle x,z \rangle_n(t)
\end{equation}
is a constant. From the invariance condition \eqref{eq inv PH_n_1}, we know that
\begin{equation*}
\left(z(b)+\xi (b-a) s + o(s)\right)
\frac{d\mathcal{T}^s}{dt}\langle x,z \rangle_n(t) =z(b).
\end{equation*}
Integrating from $a$ to $t$ we conclude that
\begin{multline}
\label{exp:prv:ad}
\left(z(b)+\xi (b-a) s + o(s)\right)\mathcal{T}^s\langle x,z \rangle_n(t)\\
=z(b)(t-a)+\left(z(b)+\xi (b-a) s + o(s)\right)\mathcal{T}^s\langle x,z \rangle_n(a).
\end{multline}
Differentiating \eqref{exp:prv:ad} with respect to $s$, and then putting $s=0$, we obtain:
\begin{equation}
\label{eq:relation}
\xi (b-a) t + z(b) T\langle x,z \rangle_n(t) =\xi(b-a)a+z(b)T\langle x,z\rangle_n(a).
\end{equation}
We conclude from \eqref{eq:relation} that expression \eqref{eq:constant} is the constant
$$
(b-a)\xi - \frac{z(b)T\langle x,z\rangle_n(a)}{b-a}.
$$
The proof is complete.
\end{proof}


\section{Conclusion}
\label{sec:conc}

We investigated the higher-order variational problem of Herglotz
from an optimal control point of view. The higher-order
generalized Euler--Lagrange equation and the transversality conditions
proved in \cite{Simao+NM+Torres2013} were obtained in the wider class
of piecewise admissible functions. Moreover, we proved two important new results:
a DuBois--Reymond necessary condition and Noether's theorem
for higher-order variational problems of Herglotz type.


\section*{Acknowledgements}

This work was supported by Portuguese funds through the
\emph{Center for Research and Development in Mathematics and Applications} (CIDMA)
and the \emph{Portuguese Foundation for Science and Technology}
(``FCT --- Funda\c{c}\~ao para a Ci\^encia e a Tecnologia''),
within project UID/MAT/04106/2013. The authors are grateful
to an anonymous referee for several comments and suggestions.



\medskip

Received September 2014; revised July 2015.

\medskip



\begin{thebibliography}{99}
	
\bibitem{MR3072684} (MR3072684) [10.1016/S0034-4877(13)60034-8]
\newblock G. S. F. Frederico\ and\ D. F. M. Torres,
\newblock \emph{Fractional isoperimetric Noether's theorem in the Riemann-Liouville sense},
\newblock Rep. Math. Phys. \textbf{71}, no.~3, 291--304 (2013)
\newblock {\tt arXiv:1205.4853}

\bibitem{Herglotz1930}
\newblock G. Herglotz,
\newblock Ber\"uhrungstransformationen,
\newblock \emph{Lectures at the University of G\"ottingen}, G\"ottingen (1930)

\bibitem{MR2316829} (MR2316829)
\newblock S. Lenhart\ and\ J. T. Workman,
\newblock {\it Optimal control applied to biological models},
\newblock Chapman \& Hall/CRC, Boca Raton, FL, 2007.

\bibitem{Noether1918}
\newblock E. Noether,
\newblock Invariante Variationsprobleme,
\newblock \emph{Nachr. v. d. Ges. d. Wiss. zu Göttingen}, 235--257 (1918)

\bibitem{Pontryagin} (MR166037)
\newblock L. S. Pontryagin, V. G. Boltyanskii, R. V. Gamkrelidze\ and\ E. F. Mishchenko,
\newblock \emph{The mathematical theory of optimal processes.}
\newblock Interscience Publishers, John Wiley and Sons Inc, New York, London (1962)

\bibitem{Simao+NM+Torres2013} (MR3286693) [10.1007/s10013-013-0048-9]
\newblock S. P. S. Santos, N. Martins\ and\ D. F. M. Torres,
\newblock \emph{Higher-order variational problems of Herglotz type},
\newblock Vietnam J. Math. {\bf 42} (2014), no.~4, 409--419.
\newblock {\tt arXiv:1309.6518}

\bibitem{MyID:313} [10.1007/978-3-319-20352-2_7]
\newblock S. P. S. Santos, N. Martins\ and\ D. F. M. Torres,
\newblock \emph{An optimal control approach to Herglotz variational problems},
in ``Optimization in the Natural Sciences''
(eds. A.~Plakhov, T.~Tchemisova and A.~Freitas),
\newblock Communications in Computer and Information Science,
Vol. 499, Springer, (2015), 107--117.
\newblock {\tt arXiv:1412.0433}

\bibitem{Torres2001} (MR1901565) [10.1007/3-540-45606-6_20]
\newblock D. F. M. Torres,
\newblock \emph{Conservation laws in optimal control},
\newblock in {\it Dynamics, bifurcations, and control (Kloster Irsee, 2001)}, 287--296,
\newblock Lecture Notes in Control and Inform. Sci., 273, Springer, Berlin, (2002)

\bibitem{Torres2002} [10.3166/ejc.8.56-63]
\newblock D. F. M. Torres,
\newblock \emph{On the Noether theorem for optimal control},
\newblock European Journal of Control {\bf 8}, no.~1 , 56--63 (2002)

\bibitem{Torres2004} (MR2040245)
\newblock D. F. M. Torres,
\newblock Quasi-invariant optimal control problems,
\newblock \emph{Port. Math. (N.S.)} {\bf 61}, no.~1, 97--114 (2004)
\newblock {\tt arXiv:math/0302264}

\bibitem{MR2098297} (MR2098297) [10.3934/cpaa.2004.3.491]
\newblock D. F. M. Torres,
\newblock \emph{Proper extensions of Noether's symmetry theorem
for nonsmooth extremals of the calculus of variations},
\newblock Commun. Pure Appl. Anal. \textbf{3}, no.~3, 491--500 (2004)

\bibitem{vanBrunt} (MR2004181)
\newblock B. van Brunt,
\newblock \textit{The calculus of variations},
\newblock Universitext,
\newblock Springer-Verlag, New York (2004)

\end{thebibliography}
\end{document}